\newtheorem{thm}{Theorem}
\newtheorem{lem}{Lemma}
\newtheorem{cor}{Corollary}
\newtheorem{prp}{Proposition}
\theoremstyle{definition}
\newtheorem{dfn}{Definition}
\theoremstyle{remark}
\newtheorem{ex}{Example}
\renewcommand{\sigma}{\varsigma}
\renewcommand{\epsilon}{\varepsilon}
\renewcommand{\phi}{\varphi}
\newcommand{\cleq}{\leq_{\mathrm{c}}}
\newcommand{\ceq}{\sim_{\mathrm{c}}}
\newcommand{\cleqq}{\leq_{[\mathrm{c}]}}
\newcommand{\tr}{{\negthickspace \top \negthickspace}}
\author{Robert Angarone}
\address{University of Minnesota\\
206 Church Street SE \\ 55455 Minneapolis, Minnesota, USA}
\email{angar017@umn.edu}
\author{Daniel Soskin}
\address{Institute for Advanced Study\\
1 Einstein Drive\\08540 Princeton, New Jersey, USA}
\email{dsoskin@ias.edu}
\title{Generalized Diagonals in Positive Semi-Definite Matrices}
\begin{document}

\begin{abstract}
We describe all inequalities among generalized diagonals in positive semi-definite matrices. These turn out to be governed by a simple partial order on the symmetric group. This provides an analogue of results of Drake, Gerrish, and Skandera on inequalities among generalized diagonals in totally nonnegative matrices.
\end{abstract}

\maketitle

\section{Introduction}
A matrix $X \in M_{n}(\mathbb C)$ is called \emph{Hermitian} if it satisfies $X^*=X$, where $X^*$ denotes the conjugate transpose of $X$. A matrix is called \emph{positive semi-definite} (PSD) if it is Hermitian and also satisfies $z^* X z \geq 0$ for all $z \in \mathbb C^n.$ If in fact $X \in M_{n}(\mathbb R)$, the Hermitian property reduces to symmetry $X^{\tr} = X$, and the positive semi-definite property reduces to $z^{\tr} X z \geq 0$ for all $z \in \mathbb R^n$.

Given a class, such as the above, of matrices defined by positivity conditions, it is common to ask which \textit{additional} inequalities the entries of all matrices in that class satisfy. For example, another well-studied class of matrices defined by positivity conditions is that of \emph{totally nonnegative} (TNN) matrices, i.e. those matrices in $M_{n}(\mathbb R)$ for which each minor is nonnegative. In \cite{DGS}, Drake, Gerrish and Skandera showed that for two permutations $\tau, \sigma$, the following inequality
\begin{equation}\label{eq:dgs}
x_{1,\sigma(1)}x_{2,\sigma(2)}\dots x_{n,\sigma(n)} \leq x_{1,\tau(1)}x_{2,\tau(2)}\dots x_{n,\tau(n)}    
\end{equation}
holds for any TNN matrix $X=(x_{i,j})_{i,j=1}^n$ if and only if $\tau \leq \sigma$ in Bruhat order. 

Our goal in the present work is to study inequalities of the form~(\ref{eq:dgs}) for PSD matrices. The close ties between inequalities for TNN and PSD matrices are presented in \cite{fallat2011total}. Yet there are some inequalities, such as~(\ref{eq:dgs}), which have been studied for one of these classes, but not the other; see \cite{SkanSoskinBJArx} for recent results and open questions. For a general PSD matrix, which may have complex entries, one or both products in inequality~(\ref{eq:dgs}) may be complex, so inequality~(\ref{eq:dgs}) may not be well-defined. As such, we begin by describing inequalities in PSD matrices of the form
\begin{equation}\label{eq:|dgs|}
|x_{1,\sigma(1)}x_{2,\sigma(2)}\dots x_{n,\sigma(n)}| \leq |x_{1,\tau(1)}x_{2,\tau(2)}\dots x_{n,\tau(n)}|.
\end{equation}
We will then be able to derive inequalities that hold without absolute values, as well as all inequalities for real-valued PSD matrices, as corollaries. These are described in terms of a simple, combinatorially-defined partial order on the symmetric group.

The Hermitian property will force some of the products $x_{1,\sigma(1)}x_{2,\sigma(2)}\dots x_{n,\sigma(n)}$ to be equal. Thus we also give a combinatorial description of the conditions under which two such products are equal. The resulting equivalence relation on the symmetric group, when combined with the partial order mentioned in the previous paragraph, gives rise to a complete description of the relative order of these products.

In Section~\ref{sec:background}, we set up some notation and recall essential background information. In Section~\ref{sec:partial-order}, we describe the partial order on $S_n$ which will be used to state the main results. In Section~\ref{sec:main}, we state and prove our main results, including corollaries for real-valued cases.

\section*{Acknowledgments}
The authors would like to thank Pavlo Pylyavskyy for numerous productive discussions. This material is based upon work supported by the National Science Foundation under Grant No. DMS-1926686 and Grant No. DMS-1949896.

\section{Background and Notation}\label{sec:background}


\begin{dfn}\label{dfn:gen-diag}
Fix the following notation:
    \[\begin{tabular}{rl}
    $S_n$ :& the symmetric group on $n$ letters, \\
    $M_n(\mathbb{F})$ :& the set of $n \times n$ matrices with entries in $\mathbb{F}$, \\
    $x_{ij}$ :& the $(i,j)$-th entry of the matrix $X$.
    \end{tabular}\]
In addition, for any $X \in M_n(\mathbb{C})$ and $\sigma \in S_n$, the \textit{generalized diagonal of $X$ corresponding to $\sigma$}, or simply \textit{$\sigma$-diagonal of $X$}, is defined to be
    \[X_\sigma = \prod_{k=1}^n x_{k, \sigma(k)}.\]
\end{dfn}

Next, we recall several equivalent formulations of the positive semi-definite (PSD) property. The main results of this paper will also specialize to positive \textit{definite} (PD) matrices, which are a strict subclass of PSD matrices. Thus we recall that notion here as well.

\begin{thm}\label{thm:PSD-equivalents}
The following are equivalent for $X \in M_n(\mathbb{C})$:
    \begin{enumerate}
    \item $X=X^*$ and $z^* X z \geq 0$ for all $z \in \mathbb C^n$,
    \item there exists some $B \in M_n(\mathbb{C})$ such that $X = B \cdot B^*$, and
    \item $X=X^*$ and all principal minors of $X$ are nonnegative.
    \end{enumerate}
A matrix $X \in M_n(\mathbb{C})$ is called \textit{positive definite} if one of the following equivalent conditions holds:
    \begin{enumerate}
    \item $X=X^*$ and $z^* X z > 0$ for all $z \in \mathbb C^n$,
    \item there exists some invertible $B \in M_n(\mathbb{C})$ such that $X = B \cdot B^*$, and
    \item $X=X^*$ and all principal minors of $X$ are positive.
    \end{enumerate}
\end{thm}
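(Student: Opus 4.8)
The plan is to treat the spectral theorem for Hermitian matrices as the central tool and to establish the equivalences through a short cycle of implications, handling the definite case in parallel at each step. Since any $X$ satisfying $X = X^*$ is Hermitian, I would first record that it admits a unitary diagonalization $X = U \Lambda U^*$, with $U$ unitary and $\Lambda = \mathrm{diag}(\lambda_1, \dots, \lambda_n)$ having real entries. This converts each of the three conditions into a statement about the eigenvalues $\lambda_i$, which is what makes the equivalences transparent.

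For (1) $\Leftrightarrow$ (2): if $X = B B^*$ then $X^* = X$ automatically, and $z^* X z = (B^* z)^*(B^* z) = \|B^* z\|^2 \ge 0$, giving (2) $\Rightarrow$ (1). Conversely, testing the hypothesis $z^* X z \ge 0$ against the eigenvectors of $X$ forces each $\lambda_i \ge 0$, so the matrix $B := U \Lambda^{1/2} U^*$, with $\Lambda^{1/2} = \mathrm{diag}(\sqrt{\lambda_1}, \dots, \sqrt{\lambda_n})$, is well-defined and satisfies $B B^* = U \Lambda U^* = X$. The positive-definite refinement runs along the same lines: $\|B^* z\|^2 > 0$ for every $z \ne 0$ exactly when $B^*$ is injective, i.e. when $B$ is invertible, and the square root $B$ is invertible precisely when every $\lambda_i > 0$.

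The implication (1) $\Rightarrow$ (3) I would obtain by restriction. For any index set $S$, padding a vector $w \in \mathbb{C}^{|S|}$ with zeros outside $S$ shows that each principal submatrix $X_S$ again satisfies condition (1); hence $X_S$ has nonnegative eigenvalues, so its determinant, being their product, is nonnegative. This is exactly the nonnegativity of the corresponding principal minor, and the strict analogue gives positive principal minors in the definite case.

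The crux is (3) $\Rightarrow$ (1), which I would settle through the characteristic polynomial. Writing $\det(tI - X) = \sum_{k=0}^{n} (-1)^k c_k\, t^{n-k}$, the coefficient $c_k$ is the sum of the $k \times k$ principal minors (with $c_0 = 1$), so hypothesis (3) makes every $c_k \ge 0$. Substituting $t = -\mu$ with $\mu > 0$ yields $\det(-\mu I - X) = (-1)^n \sum_{k} c_k \mu^{n-k}$, and the sum is strictly positive since its $k = 0$ term is $\mu^n$; thus $-\mu$ is never a root, $X$ has no negative eigenvalues, and being Hermitian it is PSD. The same computation covers the definite case: positivity of all principal minors gives each $c_k > 0$, and in particular $\det X = c_n > 0$ excludes $0$ as an eigenvalue, so every $\lambda_i > 0$. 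I expect the sign bookkeeping in this final substitution to be the only genuinely delicate point, as everything else reduces to the spectral theorem and direct computation.
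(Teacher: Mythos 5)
The paper states this theorem without proof; it is recalled as standard background (the text before it reads ``we recall several equivalent formulations''), so there is no in-paper argument to compare against. Your proposal is a correct, self-contained proof. The cycle of implications is complete: $(1)\Leftrightarrow(2)$ via the spectral theorem and the square root $B = U\Lambda^{1/2}U^*$, $(1)\Rightarrow(3)$ by restriction to principal submatrices, and $(3)\Rightarrow(1)$ via the characteristic polynomial. You correctly navigate the one genuinely delicate point: for the semi-definite case one must use \emph{all} principal minors, not merely the leading ones (Sylvester's criterion fails for PSD, as $\mathrm{diag}(0,-1)$ shows), and your identification of $c_k$ with the sum of all $k\times k$ principal minors, followed by the substitution $t=-\mu$ to exclude negative eigenvalues, handles this properly. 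The sign bookkeeping checks out: $\det(-\mu I - X) = (-1)^n\sum_k c_k\,\mu^{n-k}$ with the $k=0$ term equal to $\mu^n>0$, so no $-\mu<0$ is an eigenvalue; the definite refinement via $c_n=\det X>0$ is likewise correct. The only cosmetic remark is that in $(3)\Rightarrow(1)$ you should make explicit that Hermitian plus nonnegative spectrum yields $z^*Xz=\sum_i\lambda_i|\langle u_i,z\rangle|^2\ge 0$, but this is immediate from the diagonalization you already set up.
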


Positive semi-definite matrices are but one of many classes of `positive' matrices. Of particular interest to combinatorialists is the class of totally nonnegative (TNN) matrices, i.e. those matrices for which \textit{every} minor is nonnegative. This condition is a significant strengthening of item (iii) in the definitions above; however, we do not require symmetry for TNN matrices.

Work of Drake, Gerrish, and Skandera \cite{DGS} establishes that generalized diagonals in TNN matrices are ordered according to the Bruhat order. This well-studied partial order on the symmetric group has a rich combinatorial structure, and is the subject of many deep results and conjectures. Here we present one of the many characterizations of Bruhat order, found in \cite{Humphreys}; we also refer the reader to \cite{BB} for more background material on Bruhat order.

\begin{dfn}\label{dfn:bruhat-order}[\cite{Humphreys}, p. 119]
Define a partial order on $S_n$ via the following rule: given two permutations $\sigma, \tau \in S_n$, we say $\tau \leq \sigma$ \textit{in (strong) Bruhat order} if $\sigma$ can be obtained from $\tau$ by a sequence of transpositions $(i,j)$ with $i<j$ and $i$ occurring before $j$ when $\tau$ is written in one-line notation.
\end{dfn}

\begin{thm}[\cite{DGS}]
If $X \in M_n(\mathbb{R})$ is totally nonnegative and $\sigma,\tau \in S_n$ are permutations with $\tau \leq \sigma$ in Bruhat order, then $X_\sigma \leq X_\tau$.
\end{thm}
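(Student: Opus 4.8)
The plan is to reduce the statement to a single ``inversion'' move and then recognize that move as the nonnegativity of a single $2\times 2$ minor. By Definition~\ref{dfn:bruhat-order}, if $\tau \leq \sigma$ then there is a chain $\tau = \pi_0, \pi_1, \dots, \pi_m = \sigma$ in which each $\pi_{t+1}$ is obtained from $\pi_t$ by transposing two values $i<j$ whose positions satisfy that $i$ lies to the left of $j$ in the one-line notation of $\pi_t$. Since the diagonals are multiplicative and the desired comparison $X_{\pi_0} \geq X_{\pi_1} \geq \dots \geq X_{\pi_m}$ follows by transitivity from the one-step inequalities $X_{\pi_{t+1}} \leq X_{\pi_t}$, it suffices to treat a single such step. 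I would therefore assume that $\sigma$ is obtained from $\tau$ by one admissible transposition.

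For that single step, write $a = \tau^{-1}(i)$ and $b = \tau^{-1}(j)$, so that the hypothesis ($i<j$, with $i$ occurring before $j$) becomes $a < b$ and $i < j$, with $\sigma(a) = j$, $\sigma(b) = i$, and $\sigma(k) = \tau(k)$ for all other $k$. The two generalized diagonals then share every factor except those in rows $a$ and $b$: we have $X_\tau = x_{a,i}\,x_{b,j}\cdot C$ and $X_\sigma = x_{a,j}\,x_{b,i}\cdot C$, where $C = \prod_{k \neq a,b} x_{k,\tau(k)}$. Because $X$ is totally nonnegative all of its entries are nonnegative, so $C \geq 0$; hence $X_\tau - X_\sigma = C\,(x_{a,i}\,x_{b,j} - x_{a,j}\,x_{b,i})$, and it suffices to show that the parenthesized quantity is nonnegative.

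The final step is to observe that $x_{a,i}\,x_{b,j} - x_{a,j}\,x_{b,i}$ is exactly the $2\times 2$ minor of $X$ on rows $\{a,b\}$ and columns $\{i,j\}$, with both index sets listed in increasing order since $a<b$ and $i<j$. Total nonnegativity of $X$ asserts precisely that every such minor is nonnegative, which closes the argument.

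I do not expect a serious obstacle; the mathematical content lies entirely in arranging the bookkeeping so that a single covering move in Bruhat order is matched with a single $2\times 2$ minor. The one point deserving care is the reduction to a single step: one must confirm that the chain characterization in Definition~\ref{dfn:bruhat-order} genuinely allows each intermediate transposition to be read off from the \emph{current} permutation $\pi_t$, so that each one-step comparison is itself governed by a bona fide minor of $X$. Once that is granted, multiplicativity of the diagonals together with the nonnegativity of the shared factor $C$ assembles the chain of inequalities without further difficulty.
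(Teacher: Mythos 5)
Your argument is correct: reducing along a chain of admissible transpositions and recognizing each one-step comparison $x_{a,i}x_{b,j} - x_{a,j}x_{b,i} \geq 0$ as the nonnegativity of the $2\times 2$ minor on rows $\{a,b\}$ and columns $\{i,j\}$ is precisely the standard proof of this direction in \cite{DGS}. The paper itself states this theorem as an imported result and gives no proof, so there is nothing to compare against beyond noting that your route is the expected one; your flagged point of care (that each transposition in Definition~\ref{dfn:bruhat-order} is admissible relative to the \emph{current} permutation in the chain, so that every step is governed by an honest minor) is indeed the only subtlety, and you resolve it correctly.
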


\section{A Partial Order on the Symmetric Group}\label{sec:partial-order}

\begin{dfn}\label{dfn:cycle-notation}
For a permutation $\sigma \in S_n$ and $\ell \geq 2$, let $C_\ell(\sigma)$ be the set of $\ell$-cycles of $\sigma$. Let $C_1(\sigma)$ be the set of fixed points of $\sigma$. Let $C(\sigma) = C_1(\sigma) \cup C_2(\sigma) \cup \ldots \cup C_n(\sigma)$.
\end{dfn}

\begin{dfn}\label{dfn:cleq}
Define a partial order called \textit{cycle inclusion order} on $S_n$ as follows: $\tau \cleq \sigma$ if $C_k(\tau) \subseteq C_k(\sigma)$ for all $k \geq 2$.
\end{dfn}

\begin{ex}\label{ex:cleq}
We have $(132) \cleq (132)(45)$. Letting $\textrm{id}$ denote the identity permutation, we have (vacuously) that $\textrm{id} \cleq \sigma$ for any $\sigma \in S_n$. The permutations $(123)$ and $(132)$ are incomparable in cycle inclusion order. 
\end{ex}


\begin{prp}\label{prp:relationship-to-bruhat}
If $\tau \cleq \sigma$, then $\tau \leq \sigma$ in Bruhat order.
\end{prp}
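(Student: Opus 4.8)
The plan is to work with the standard rank-function characterization of Bruhat order rather than directly with the transposition definition of Definition~\ref{dfn:bruhat-order}; the two are equivalent (see \cite{BB}). For a permutation $w \in S_n$, set $r_w(i,j) = |\{k \le i : w(k) \le j\}|$, the number of dots of the permutation matrix of $w$ lying weakly north-west of position $(i,j)$. The criterion states that $\tau \le \sigma$ in Bruhat order if and only if $r_\tau(i,j) \ge r_\sigma(i,j)$ for all $i,j \in \{1,\dots,n\}$ (the Bruhat-smaller permutation has the larger rank function). So it suffices to verify this family of inequalities under the hypothesis $\tau \cleq \sigma$.

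First I would decode the hypothesis combinatorially. Since $C_k(\tau) \subseteq C_k(\sigma)$ for every $k \ge 2$, each nontrivial cycle of $\tau$ is literally a cycle of $\sigma$. Writing $T$ for the support of $\tau$ (its set of non-fixed points), this means $T$ is a union of cycles of $\sigma$ and that $\tau$ and $\sigma$ agree on $T$. On the complement $U = \{1,\dots,n\}\setminus T$ the permutation $\tau$ is the identity, while the remaining cycles and fixed points of $\sigma$ all lie in $U$; hence $\sigma(U)=U$ and $\sigma$ restricts to a permutation $\rho := \sigma|_U$ of $U$.

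The key computation then compares rank functions termwise. For $k \le i$, the index $k$ contributes to $r_\tau(i,j)$ exactly when $\tau(k)\le j$, and to $r_\sigma(i,j)$ exactly when $\sigma(k)\le j$. For $k \in T$ these conditions coincide because $\tau(k)=\sigma(k)$, so such $k$ cancel in the difference; using $\tau(k)=k$ for $k\in U$, we are left with $r_\tau(i,j)-r_\sigma(i,j) = |\{k\in U: k\le i,\ k\le j\}| - |\{k\in U: k\le i,\ \rho(k)\le j\}|$. To see this is nonnegative, put $P = \{k \in U : k \le i\}$ and $Q = \{k \in U : k \le j\}$: the first term equals $\min(|P|,|Q|)$, while the second equals $|P \cap \rho^{-1}(Q)| \le \min(|P|,\,|\rho^{-1}(Q)|) = \min(|P|,|Q|)$, since $\rho$ is a bijection of $U$. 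Thus $r_\tau(i,j)\ge r_\sigma(i,j)$ for all $i,j$, giving $\tau \le \sigma$.

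I expect the only real friction to be bookkeeping rather than a genuine obstacle: one must justify the cancellation over $T$ carefully (this is exactly where $\tau|_T=\sigma|_T$ and $\sigma(U)=U$ are used) and invoke the equivalence between the transposition definition of Bruhat order and the rank-function criterion. An alternative argument, closer in spirit to Definition~\ref{dfn:bruhat-order}, is to reduce by transitivity to adjoining a single disjoint cycle $c$ to a permutation $\pi$ fixing the support of $c$, prove $\mathrm{id} \le c$ inside the Young subgroup on that support via an explicit sequence of length-increasing transpositions, and then left-translate that chain by $\pi$; this works because $\pi$ commutes with each such transposition and fixes its two moved points, so every step stays length-increasing. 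I would present the rank-function argument as the main proof, since it is shorter and uniform.
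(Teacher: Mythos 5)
Your proof is correct, but it takes a genuinely different route from the paper's. The paper works directly with Definition~\ref{dfn:bruhat-order}: it factors $\sigma = \tau \cdot \prod \pi$ over the cycles $\pi \in C_\ell(\sigma)\setminus C_\ell(\tau)$, observes that each such $\pi$ is supported on fixed points of $\tau$ (by disjointness of the cycles of $\sigma$), and then builds each $\pi$ from transpositions $(i,j)$ with $i<j$ acting on entries that appear in increasing order in the one-line notation. Your main argument instead passes to the rank-matrix criterion $r_w(i,j) = |\{k\le i : w(k)\le j\}|$ and verifies $r_\tau \ge r_\sigma$ pointwise. The combinatorial decoding of the hypothesis is the same in both proofs (the support $T$ of $\tau$ is $\sigma$-stable, $\tau|_T = \sigma|_T$, and $\sigma$ permutes the complement $U$), but your termwise cancellation over $T$ followed by the estimate $|P\cap\rho^{-1}(Q)|\le\min(|P|,|Q|)$ is clean and entirely uniform --- in particular it sidesteps the need to exhibit an explicit chain of length-increasing transpositions and to check that each intermediate step of that chain is still length-increasing, which is the delicate point in the transposition-based argument. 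The trade-off is that you must import the equivalence of the rank-function criterion with the definition the paper actually uses, which you correctly flag and attribute to \cite{BB}. Your sketched alternative at the end (adjoining one disjoint cycle at a time and translating a length-increasing chain for that cycle) is essentially the paper's proof. Both arguments are valid; yours is shorter once the rank criterion is granted.
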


\begin{proof}
Since $C_\ell(\tau) \subseteq C_\ell(\sigma)$ for all $\ell \geq 2$, we may write
\begin{align*}
\sigma &= \prod_{\ell \geq 2} \prod_{\pi \in C_\ell(\sigma)} \pi \\
&=  \prod_{\ell \geq 2} \left(\prod_{\kappa \in C_\ell(\tau)} \kappa \prod_{\pi \in C_\ell(\sigma) \setminus C_\ell(\tau)} \pi \right) \\
&= \tau \left(\prod_{\ell \geq 2} \prod_{\pi \in C_\ell(\sigma) \setminus C_\ell(\tau)} \pi\right).
\end{align*}
By Definition~\ref{dfn:bruhat-order}, it will suffice to show the following claim: each $\pi \in C_\ell(\sigma) \setminus C_\ell(\tau)$ can be obtained as a sequence of transpositions $(i,j)$ with $i < j$ and $i$ appearing before $j$ when $\tau$ is written in one-line notation.

Suppose $\pi \in C_{\ell}(\sigma) \setminus C_{\ell}(\tau)$, and write $\pi = (c_1, \ldots, c_\ell)$. We claim each $c_i$ is fixed by $\tau$. Otherwise, if $\tau(c_i) \neq c_i$, then there is some cycle of $\tau$ involving $c_i$. That cycle is either equal to $\pi$ or not a cycle of $\sigma$, since the cycles of $\sigma$ are disjoint. This would be a contradiction in either case. Since each $c_i$ is fixed by $\tau$, the numbers $c_1, \ldots, c_\ell$ appear in increasing order when $\tau$ is written in one-line notation. Thus the claim is reduced to the fact that $\pi$ can be obtained as a sequence of transpositions $(i,j)$ with $i < j$ and $i,j \in \{c_1,\ldots,c_\ell\}$. This is possible for any cycle.
\end{proof}

\section{Inequalities for Positive Semi-Definite Matrices}\label{sec:main}

\begin{prp}\label{prp:cleqq-implies-inequality}
Suppose $X \in M_n(\mathbb{C})$ is positive semi-definite, and that $\sigma, \tau \in S_n$. If $\tau \cleq \sigma$, then $|X_\sigma| \leq |X_\tau|$.
\end{prp}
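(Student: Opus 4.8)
The plan is to lean on the factorization characterization of PSD matrices from Theorem~\ref{thm:PSD-equivalents}(ii), namely $X = BB^*$ for some $B \in M_n(\mathbb{C})$, combined with the Cauchy--Schwarz inequality. Writing $b_1, \dots, b_n$ for the rows of $B$ viewed as vectors in $\mathbb{C}^n$ and $\langle u, v \rangle = \sum_k u_k \overline{v_k}$ for the standard Hermitian inner product, this factorization gives $x_{ij} = \langle b_i, b_j \rangle$, and in particular $x_{ii} = \|b_i\|^2 \ge 0$. Cauchy--Schwarz then yields the pointwise bound $|x_{ij}| = |\langle b_i, b_j\rangle| \le \|b_i\|\,\|b_j\| = \sqrt{x_{ii}}\,\sqrt{x_{jj}}$ for every pair of indices, which is the only analytic ingredient needed.

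Next I would use the hypothesis $\tau \cleq \sigma$ to cancel the factors that $X_\sigma$ and $X_\tau$ share. Since $C_k(\tau) \subseteq C_k(\sigma)$ for all $k \ge 2$, any index $k$ lying in a nontrivial cycle of $\tau$ lies in the same cycle of $\sigma$, so $\sigma(k) = \tau(k)$; and any index fixed by $\sigma$ is also fixed by $\tau$. Hence $\sigma(k) = \tau(k)$ for every $k$ outside the set $E$ of indices moved by $\sigma$ but fixed by $\tau$. Collecting the matching factors indexed by $k \notin E$ into a common term $P$, I can write $X_\sigma = P \cdot \prod_{k \in E} x_{k,\sigma(k)}$ and $X_\tau = P \cdot \prod_{k \in E} x_{k,k}$. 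Taking absolute values and noting $|P| \ge 0$ (so the claim is immediate when $P = 0$), it suffices to prove
\[\Big|\prod_{k \in E} x_{k,\sigma(k)}\Big| \le \prod_{k \in E} x_{k,k}.\]

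Finally I would apply the pointwise bound over the index set $E$. The crucial structural fact is that $\sigma$ restricts to a bijection of $E$ onto itself, since $E$ is exactly the union of those cycles of $\sigma$ that are disjoint from the cycles of $\tau$ (these being the extra cycles, supported on points fixed by $\tau$). Thus
\[\Big|\prod_{k \in E} x_{k,\sigma(k)}\Big| = \prod_{k \in E} |\langle b_k, b_{\sigma(k)}\rangle| \le \prod_{k \in E} \|b_k\|\,\|b_{\sigma(k)}\| = \Big(\prod_{k \in E}\|b_k\|\Big)\Big(\prod_{k \in E}\|b_{\sigma(k)}\|\Big) = \prod_{k \in E}\|b_k\|^2 = \prod_{k \in E} x_{k,k},\]
where the reindexing $\prod_{k\in E}\|b_{\sigma(k)}\| = \prod_{k\in E}\|b_k\|$ uses precisely that $\sigma$ permutes $E$. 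The main point to get right is this last collapse: it is what forces each diagonal norm $\|b_k\|$ to appear with total multiplicity two and thereby recovers exactly the diagonal product. I expect the only real obstacle to be the bookkeeping that isolates $E$ and verifies that $\sigma$ acts on it as a permutation; once that is in place, the estimate is an entirely elementary consequence of Cauchy--Schwarz.
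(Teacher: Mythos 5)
Your proof is correct and takes essentially the same route as the paper's: the Cauchy--Schwarz bound $|x_{ij}| \le \sqrt{x_{ii}x_{jj}}$ is exactly the key inequality the paper extracts from nonnegativity of the $2\times 2$ principal minors, and your set $E$ is precisely the union of the supports of the cycles in $C_\ell(\sigma)\setminus C_\ell(\tau)$, whose elements the paper likewise observes are fixed points of $\tau$. The only cosmetic difference is that you collapse $\prod_{k\in E}\|b_{\sigma(k)}\|$ globally using that $\sigma$ permutes $E$, whereas the paper telescopes the square roots around each extra cycle separately.
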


\begin{proof}
The key observation is that, by Theorem~\ref{thm:PSD-equivalents}, the principal minors of $X$ are nonnegative. In particular, for any $i,j \in [n]$, we have $0 \leq x_{ii}x_{jj} - x_{ij}x_{ji}$. Since $x_{ii}x_{jj}$ is real, and we know $x_{ij}x_{ji} = x_{ij}\overline{x_{ij}}$ is real, we may re-write this as $x_{ij}x_{ji} \leq x_{ii}x_{jj}$. In fact, since both sides of the equation are positive, we can take absolute values to obtain $|x_{ij}||x_{ji}| \leq |x_{ii}||x_{jj}|$. Since $x_{ij}=\overline{x_{ji}}$, and therefore $|x_{ij}|=|x_{ji}|$, the inequality can be rewritten as
    \begin{equation}\label{eq:key-2by2}
    |x_{ij}|^2 \leq |x_{ii}||x_{jj}|, \quad \text{i.e.} \quad |x_{ij}| \leq \sqrt{|x_{ii}x_{jj}|}.
    \end{equation}
Thus, for any cycle $\pi = (c_1, \ldots, c_\ell)$ we have the following inequality:
\begin{align}
    |x_{c_1,c_2}x_{c_2,c_3}\ldots x_{c_{\ell},c_1}| &\leq \sqrt{|x_{c_1,c_1}||x_{c_2,c_2}|}\cdot\sqrt{|x_{c_2,c_2}||x_{c_3,c_3}|}\cdots\sqrt{|x_{c_\ell,c_\ell}||x_{c_1,c_1}|}  \\
    &= |x_{c_1,c_1}|\cdot|x_{c_2,c_2}|\cdots|x_{c_{\ell},c_{\ell}}|. \label{eq:one-cycle-ineq}
\end{align}
For any $\tau \cleq \sigma$, observe that
\begin{align*}
    |X_\sigma| &= \left(\prod_{\ell \geq 2} \prod_{\substack{\pi \in C_\ell(\sigma) \\ \pi = (c_1, \ldots, c_\ell)}} |x_{c_1,c_2}x_{c_2,c_3}\cdots x_{c_{\ell},c_1}|\right) \left(\prod_{i \in C_1(\sigma)} |x_{ii}| \right) \\
    &= \left(\prod_{\ell \geq 2} \prod_{\substack{\pi \in C_\ell(\tau) \\ \pi = (c_1, \ldots, c_\ell)}} |x_{c_1,c_2}x_{c_2,c_3}\cdots x_{c_{\ell},c_1}| \prod_{\substack{\kappa \in C_\ell(\sigma) \setminus C_\ell(\tau) \\ \kappa = (c_1, \ldots, c_\ell)}} |x_{c_1,c_2}x_{c_2,c_3}\cdots x_{c_{\ell},c_1}|\right) \left(\prod_{i \in C_1(\sigma)} |x_{ii}| \right).\\
\intertext{Applying Inequality~\ref{eq:one-cycle-ineq} to each $\kappa \in C_\ell(\sigma)\setminus C_\ell(\tau)$, we obtain}
    &\leq \left(\prod_{\ell \geq 2} \prod_{\substack{\pi \in C_\ell(\tau) \\ \pi = (c_1, \ldots, c_\ell)}} |x_{c_1,c_2}x_{c_2,c_3}\ldots x_{c_{\ell},c_1}|\right) \left(\prod_{\substack{\kappa \in C_\ell(\sigma) \setminus C_\ell(\tau) \\ \kappa = (c_1, \ldots, c_\ell)}} |x_{c_1,c_1}x_{c_2,c_2}\cdots x_{c_{\ell},c_{\ell}}|\right) \left(\prod_{i \in C_1(\sigma)} |x_{ii}| \right). \\
\intertext{As was argued in the proof of Proposition~\ref{prp:relationship-to-bruhat}, if $\kappa = (c_1, \ldots, c_\ell) \in C_\ell(\sigma) \setminus C_\ell(\tau)$, then each of the $c_i$ is in fact a fixed point of $\tau$. Thus we obtain}
    &= \left(\prod_{\ell \geq 2} \prod_{\substack{\pi \in C_\ell(\tau) \\ \pi = (c_1, \ldots, c_\ell)}} |x_{c_1,c_2}x_{c_2,c_3}\ldots x_{c_{\ell},c_1}|\right) \left(\prod_{i \in C_1(\tau)} |x_{ii}| \right) = |X_\tau|,
\end{align*}
which completes the proof.
\end{proof}

We observe that the condition $X=X^{*}$ implies more relations on the generalized diagonals of $X$ that may not come from Proposition~\ref{prp:cleqq-implies-inequality}. In particular, for any cycle $\pi = (c_1, \ldots, c_\ell)$ we have that $|x_{c_1,c_2}x_{c_2,c_3}\ldots x_{c_{\ell},c_1}|=|x_{c_2,c_1}x_{c_3,c_2}\ldots x_{c_{1},c_{\ell}}|$. To account for these cases, we introduce an equivalence relation on $S_n$.

\begin{dfn}\label{dfn:ceq}
Define an equivalence relation $\ceq$ on $S_n$ as follows: $\sigma \ceq \tau$ if each cycle of $\sigma$ is either equal to, or the inverse of, a cycle of $\tau$ and vice versa. Let $[\sigma]$ denote the equivalence class of $\sigma$ under $\ceq$.
\end{dfn}

\begin{ex}
We have $(123)(45)(678) \ceq (321)(45)(678) \ceq (123)(45)(876) \ceq (321)(45)(876)$. We have $(132)(456) \not\ceq (123)(456)$ and $(123)(45) \not\ceq (321)$.
\end{ex}

\begin{prp}\label{prp:ceq-implies-equal}
Suppose $X \in M_n(\mathbb{C})$ is positive semi-definite and that $\sigma, \tau \in S_n$. If $\tau \ceq \sigma$, then $|X_\tau| = |X_\sigma|$.
\end{prp}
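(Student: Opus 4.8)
The plan is to reduce the statement to the observation already made in the text: that reversing a single cycle preserves the absolute value of its contribution. Since $\ceq$ is generated by replacing individual cycles of a permutation with their inverses, and $|X_\sigma|$ factors as a product over the cycles of $\sigma$, it suffices to verify the claim for each cycle in isolation and then multiply.

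\begin{proof}
Suppose $\tau \ceq \sigma$. By Definition~\ref{dfn:ceq}, there is a bijection between the cycles of $\sigma$ and the cycles of $\tau$ (of each length) under which each cycle of $\sigma$ is matched to either itself or its inverse as a cycle of $\tau$. Using the factorization of a generalized diagonal over cycles, exactly as in the proof of Proposition~\ref{prp:cleqq-implies-inequality}, we may write
\begin{align*}
    |X_\sigma| &= \left(\prod_{\ell \geq 2} \prod_{\substack{\pi \in C_\ell(\sigma) \\ \pi = (c_1, \ldots, c_\ell)}} |x_{c_1,c_2}x_{c_2,c_3}\cdots x_{c_{\ell},c_1}|\right) \left(\prod_{i \in C_1(\sigma)} |x_{ii}| \right).
\end{align*}
Because $\tau \ceq \sigma$ forces $C_1(\tau) = C_1(\sigma)$ and each $\ell$-cycle of $\tau$ to be a cycle of $\sigma$ or its inverse, it remains only to check that reversing a single cycle leaves its factor unchanged.
\end{proof}

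For the single-cycle step I would argue as follows. Write $\pi = (c_1, \ldots, c_\ell)$, so that $\pi^{-1} = (c_1, c_\ell, c_{\ell-1}, \ldots, c_2)$; the corresponding factors are $|x_{c_1,c_2}x_{c_2,c_3}\cdots x_{c_\ell,c_1}|$ and $|x_{c_2,c_1}x_{c_3,c_2}\cdots x_{c_1,c_\ell}|$. Since $X = X^*$ gives $x_{ij} = \overline{x_{ji}}$ and hence $|x_{ij}| = |x_{ji}|$, each factor in the first product equals the corresponding factor in the second, as was already noted in the text preceding Definition~\ref{dfn:ceq}. The main (and only) subtlety to get right is purely bookkeeping: one must check that the rearrangement of factors induced by inverting a cycle is a genuine bijection on the set of matrix entries appearing, so that no factor is double-counted or omitted when passing from $\pi$ to $\pi^{-1}$. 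Once the single-cycle identity is established, substituting the reversed factors for the matched cycles of $\tau$ term by term and using that $C_1(\tau)=C_1(\sigma)$ yields $|X_\sigma| = |X_\tau|$, completing the argument.
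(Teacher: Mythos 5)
Your argument is correct and is essentially the same as the paper's: both use the bijection between cycles of $\sigma$ and $\tau$ guaranteed by Definition~\ref{dfn:ceq}, together with the Hermitian identity $|x_{ij}| = |x_{ji}|$ applied factor-by-factor to show that inverting a cycle preserves the absolute value of its contribution. Your version merely makes the cycle factorization and the fixed-point bookkeeping more explicit than the paper does.
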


\begin{proof}
By Definition~\ref{dfn:ceq}, $\tau \ceq \sigma$ implies the existence of a bijective map between cycles in $C(\sigma)$ and cycles in $C(\tau)$ which takes cycles either to themselves or to their inverses. For any cycle $\pi = (c_1, \ldots, c_\ell)$ we have that $|x_{c_1,c_2}x_{c_2,c_3}\ldots x_{c_{\ell},c_1}|=|x_{c_2,c_1}x_{c_3,c_2}\ldots x_{c_{1},c_{\ell}}|$, since $X$ is Hermitian. Thus the above map preserves the absolute values of the products of matrix entries corresponding to each cycle, which implies that $|X_\tau| = |X_\sigma|$.
\end{proof}

\begin{ex}
Recall that $(123)(45)(678) \ceq (321)(45)(678) \ceq (123)(45)(876) \ceq (321)(45)(876)$. 
We have \[|X_{(123)(45)(678)}|=|X_{(321)(45)(678)}|=|X_{(123)(45)(876)}|=|X_{(321)(45)(876)}|.\]
Indeed, the equalities above follow from observations that \[|x_{1,2}x_{2,3}x_{3,1}|=|x_{1,3}x_{2,1}x_{3,2}| \quad \text{and} \quad |x_{6,7}x_{7,8}x_{8,6}|=|x_{6,8}x_{7,6}x_{8,7}|.\]
\end{ex}

Next, we introduce a partial order which describes the inequalities from Proposition~\ref{prp:cleqq-implies-inequality} and the equalities from Proposition~\ref{prp:ceq-implies-equal} simultaneously. 

\begin{dfn}\label{dfn:cleqq}
Define a partial order $\cleqq$ on the equivalence classes of $S_n$ under $\ceq$ as follows: $[\tau] \cleqq [\sigma]$ if there exist $\tau' \in [\tau]$ and $\sigma' \in [\sigma]$ such that $\tau' \cleq \sigma'$.
\end{dfn}

\begin{ex}
We have $[(123)] \cleqq [(321)(45)]$, even though $(123) \not\cleq (321)(45)$. This is because $(321) \in [(123)]$, and $(321) \cleq (321)(45)$. This comparison allow us to obtain additional relations among generalized diagonals for a positive semi-definite matrix $X$. Applying Proposition~\ref{prp:cleqq-implies-inequality}, we know that $ 
|X_{(321)(45)}|  \leq  |X_{(321)}|$; then, applying Proposition~\ref{prp:ceq-implies-equal}, we obtain $|X_{(321)(45)}|  \leq  |X_{(123)}|$.
\end{ex}


\begin{prp}\label{prp:ceqq-well-defined}
The relation in Definition~\ref{dfn:cleqq} is well-defined in the following senses:
\begin{enumerate}
\item The relation $\cleqq$ is indeed a partial order on the $\ceq$-equivalence classes in $S_n$; and
\item if $[\tau] \cleqq [\sigma]$, then for \textit{each} $\tau' \in [\tau]$, there is some $\sigma' \in [\sigma]$ such that $\tau' \cleq \sigma'$. 
\end{enumerate}
\end{prp}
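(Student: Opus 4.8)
The plan is to establish part (2) first and then derive the three partial-order axioms of part (1) from it, together with the fact that $\cleq$ (Definition~\ref{dfn:cleq}) is itself a partial order. The reason for this ordering is that Definition~\ref{dfn:cleqq} only guarantees that \emph{some} pair of representatives is comparable, whereas to chain relations (for transitivity) or to compare two mutually related classes (for antisymmetry) one needs control over \emph{which} representative appears. Part (2) supplies exactly this control, so isolating it first is the organizing idea.

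For part (2), I would suppose $\tau' \cleq \sigma'$ with $\tau' \in [\tau]$ and $\sigma' \in [\sigma]$, and let $\tau'' \in [\tau]$ be arbitrary. By Definition~\ref{dfn:ceq}, $\tau''$ is obtained from $\tau'$ by inverting some subset $S$ of its cycles (only cycles of length $\geq 3$ are genuinely affected). Each cycle in $S$ has length $\geq 2$, so by $\tau' \cleq \sigma'$ it is also a cycle of $\sigma'$. I then define $\sigma''$ by inverting exactly those same cycles inside $\sigma'$; then $\sigma'' \ceq \sigma'$, so $\sigma'' \in [\sigma]$. A cycle-by-cycle check shows that every nontrivial cycle of $\tau''$ (namely $c^{-1}$ for $c \in S$ and $c$ for the remaining cycles of $\tau'$) is a cycle of $\sigma''$, since inversion preserves cycle length; hence $\tau'' \cleq \sigma''$. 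The one point requiring care is the bookkeeping that the \emph{same} set $S$ of cycles is inverted in both permutations, which is what keeps the inclusion of cycle sets intact.

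With part (2) in hand, part (1) follows cleanly. Reflexivity is immediate since $\sigma \cleq \sigma$. For transitivity, given $[\rho] \cleqq [\tau]$ and $[\tau] \cleqq [\sigma]$, I would fix representatives $\rho' \cleq \tau'$ witnessing the first relation, then apply part (2) to the second relation \emph{with the specific representative} $\tau'$ to obtain $\sigma' \in [\sigma]$ with $\tau' \cleq \sigma'$; transitivity of $\cleq$ then yields $\rho' \cleq \sigma'$. For antisymmetry, the key device is the support size $m(\pi) = \sum_{k \geq 2} k\,|C_k(\pi)|$, which is constant on $\ceq$-classes because inverting a cycle preserves its length. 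If $\tau_1 \cleq \sigma_1$ then $m(\tau_1) \leq m(\sigma_1)$, with equality forcing $C_k(\tau_1) = C_k(\sigma_1)$ for all $k \geq 2$ and hence $\tau_1 = \sigma_1$. Thus $[\tau] \cleqq [\sigma]$ and $[\sigma] \cleqq [\tau]$ give $m([\tau]) \leq m([\sigma]) \leq m([\tau])$, so the two classes have equal support size; applied to a comparable pair of representatives this collapses them to a single permutation lying in both classes, whence $[\tau] = [\sigma]$. The main obstacle throughout is not any single computation but recognizing that part (2) is the right lemma to isolate first, after which each axiom reduces to a property of $\cleq$ together with the invariance of $m$.
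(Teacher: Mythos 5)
Your proof is correct. For what it's worth, the paper itself gives no argument here---its proof reads, in full, ``These properties are straightforward from Definition~\ref{dfn:cleqq}''---so there is no competing approach to compare against; your writeup simply supplies the details the authors chose to omit. Your two substantive ingredients are well chosen: establishing part (2) first is exactly what makes transitivity go through (since Definition~\ref{dfn:cleqq} only promises \emph{some} comparable pair of representatives, one needs part (2) to chain $\rho' \cleq \tau'$ with a relation starting at that specific $\tau'$), and the support-size invariant $m(\pi) = \sum_{k \geq 2} k\,|C_k(\pi)|$ cleanly handles antisymmetry, which is the only axiom where the freedom to choose different representatives for the two directions causes genuine trouble. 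The cycle-by-cycle verification in part (2)---inverting the same subset $S$ of cycles in $\sigma'$ as was inverted to pass from $\tau'$ to $\tau''$---is airtight, since every cycle in $S$ has length $\geq 2$ and hence lies in $C_\ell(\sigma')$ by hypothesis. No gaps.
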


\begin{proof}
These properties are straightforward from Definition~\ref{dfn:cleqq}.
\end{proof}



Our goal in the remainder of this section is to demonstrate that the inequalities in Proposition~\ref{prp:cleqq-implies-inequality} and the equalities in Proposition~\ref{prp:ceq-implies-equal}, along with their immediate corollaries, are the only comparisons among absolute values of generalized diagonals which hold for \textit{all} positive semidefinite matrices. To that end, Proposition~\ref{prp:non-cleqq-counterexample} constructs a counterexample matrix for any additional inequality one might propose. Indeed, the counterexample is a real-valued, positive \textit{definite} matrix. This also demonstrates that one does not obtain additional relations with absolute values when restricting to this special subclass. To do this, we require one additional lemma.

\begin{lem}\label{lem:entries-to-zero}
For any pair $p,q \in [n]$ and any $\epsilon > 0$, there exists a positive definite matrix $A$ such that $a_{pq}=a_{qp}=\epsilon$ while all other entries of $A$ are greater than $1$.
\end{lem}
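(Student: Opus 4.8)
The plan is to construct $A$ completely explicitly as a real symmetric matrix and then certify positive definiteness by a diagonal dominance argument, rather than by producing a factorization $A = BB^*$. The guiding principle is that we have total freedom in the diagonal entries, so we can inflate them to dominate whatever we put off the diagonal; this lets us fix the single pair $a_{pq} = a_{qp} = \epsilon$ and make every other entry as large as we like while preserving positive definiteness.

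Concretely, I would fix any constant $c > 1$ and define $A = (a_{ij})$ by setting $a_{pq} = a_{qp} = \epsilon$, setting $a_{ij} = c$ for every other off-diagonal pair $(i,j)$, and setting $a_{ii} = M$ for each $i \in [n]$, where $M$ is a large positive constant chosen below. By construction $A$ is real symmetric, the $(p,q)$ and $(q,p)$ entries are exactly $\epsilon$, and every remaining entry (the diagonal entries $M$ and the off-diagonal entries $c$) exceeds $1$ provided $M > 1$. To choose $M$ I would invoke the standard criterion that a real symmetric matrix with positive diagonal satisfying strict diagonal dominance, $a_{ii} > \sum_{j \neq i} |a_{ij}|$ for every $i$, is positive definite; this is immediate from the Gershgorin circle theorem, since each (necessarily real) eigenvalue lies in a disc centered at some $a_{ii}$ of radius $\sum_{j \neq i} |a_{ij}|$, and the dominance condition places every such disc strictly inside the positive reals. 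The off-diagonal row sums here are bounded above by $(n-1)c + \epsilon$, so any choice $M > (n-1)c + \epsilon$ forces strict diagonal dominance and hence positive definiteness, completing the construction.

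There is no genuine obstacle in this argument; the only point that requires a moment of care is making the dominance bound uniform over all rows, including the two rows indexed by $p$ and $q$ in which one of the summed entries is $\epsilon$ rather than $c$. Since $\epsilon$ could a priori be larger than $c$, I would use the crude but safe bound $(n-1)c + \epsilon$ on every off-diagonal row sum, which absorbs both the generic rows and the two special rows at once. Everything else is bookkeeping, and the resulting $A$ is real-valued and positive \emph{definite}, exactly as needed for the counterexample construction that follows.
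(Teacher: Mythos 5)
Your proof is correct, but it takes a genuinely different route from the paper's. The paper constructs $A$ as a Gram matrix: it writes down an explicit linearly independent family $v_1,\ldots,v_n$ (perturbations of standard basis vectors) and sets $A = BB^*$ with the $v_k$ as rows of $B$, so that positive definiteness is immediate from the factorization characterization in Theorem~\ref{thm:PSD-equivalents}, and the entries $a_{ij} = \langle v_i, v_j\rangle$ are computed directly. You instead prescribe all the entries of $A$ outright --- $\epsilon$ in the two designated positions, a constant $c>1$ elsewhere off the diagonal, and a large $M$ on the diagonal --- and certify positive definiteness by strict diagonal dominance via Gershgorin. Both arguments are complete; your handling of the row sums (using the uniform bound $(n-1)c+\epsilon$, which covers the rows through $p$ and $q$ even when $\epsilon > c$) is the only point needing care and you address it. Your approach buys flexibility and uniformity: you get to choose every off-diagonal entry freely and the positive-definiteness certificate is a one-line appeal to a standard criterion, whereas the paper's Gram-matrix construction stays closer to the definition of positive definiteness used elsewhere in the paper and produces concrete inner-product values without invoking any spectral facts. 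Either version serves the downstream counterexample construction in Proposition~\ref{prp:non-cleqq-counterexample} equally well.
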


\begin{proof}
By Definition~\ref{thm:PSD-equivalents}, to construct a positive definite matrix $A$, it suffices to write $A = BB^*$ for some invertible matrix $B$. Thinking of the rows of $B$ as a set of linearly independent vectors $\{v_1, \ldots, v_n\}$, we have $a_{ij} = (BB^*)_{ij} = \left\langle v_i, v_j \right\rangle$.

Thus we must construct a linearly independent set $\{v_1, \ldots, v_n\}$ of vectors such that $\left\langle v_p, v_q \right\rangle > 0$ is arbitrarily small while all other $\left\langle v_r, v_s \right\rangle$ remain bounded below by a positive constant. Letting $e_1, \ldots, e_n$ be the standard basis vectors, we can construct such a collection as follows:
    \[v_k = 
    \begin{cases}
    e_k + \left(\frac{\epsilon}{2}\right)e_q & \text{if } k = p \\
    e_k + \left(\frac{\epsilon}{2}\right)e_p & \text{if } k = q \\
    e_k + e_p + e_q & \text{otherwise}\\
    \end{cases}.\]
Observe that
    \[\left\langle v_p, v_q \right\rangle = \left\langle e_p + \left(\frac{\epsilon}{2}\right)e_q, \,\, e_q + \left(\frac{\epsilon}{2}\right)e_p \right\rangle = \left\langle e_p, \left(\frac{\epsilon}{2}\right)e_p \right\rangle + \left\langle e_q, \left(\frac{\epsilon}{2}\right)e_q \right\rangle = \epsilon.\]
On the other hand, if $r=p$ and $s \neq q$, we have
    \[\left\langle v_r, v_s \right\rangle = \left\langle e_p + \left(\frac{\epsilon}{2}\right)e_q, \,\, e_s + e_p + e_q \right\rangle = \left\langle e_p,e_p \right\rangle + \left\langle \left(\frac{\epsilon}{2}\right)e_q, e_q \right\rangle = 1 + \frac{\epsilon}{2} > 1;\]
similarly for $r=q$ and $s \neq p$. Lastly, if $r$ and $s$ are both not equal to $p$ or $q$, we have
    \[\left\langle v_r, v_s \right\rangle = \left\langle e_r + e_p + e_q, \,\, e_s + e_p + e_q \right\rangle = \left\langle e_p,e_p \right\rangle + \left\langle e_q,e_q\right\rangle = 2.\]
Thus this collection has the desired properties, and finishes the proof.
\end{proof}

\begin{prp}\label{prp:non-cleqq-counterexample}
If $\sigma, \tau \in S_n$ and $[\tau]$ and $[\sigma]$ are incomparable with respect to $\cleqq$, then there exist positive definite $A, A' \in M_n(\mathbb{R})$ such that $0 < A_\sigma < A_\tau$ and $0 < A'_\tau < A'_\sigma$.
\end{prp}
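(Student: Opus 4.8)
The plan is to reduce each of the two desired matrices to a single application of Lemma~\ref{lem:entries-to-zero}: for each direction I want to locate one off-diagonal position $\{p,q\}$ whose entry occurs strictly more often as a factor of one generalized diagonal than of the other, and then drive that entry to $0$. For a permutation $\rho$ and an unordered pair $\{p,q\}$ with $p \neq q$, write
\[
m_\rho(\{p,q\}) = \#\{k \in [n] : \{k,\rho(k)\} = \{p,q\}\} \in \{0,1,2\}
\]
for the number of times the (symmetric) entry $a_{pq}=a_{qp}$ occurs in $A_\rho = \prod_k a_{k,\rho(k)}$. Note that $m_\rho(\{p,q\}) = 2$ precisely when $(p\,q)$ is a $2$-cycle of $\rho$, and $m_\rho(\{p,q\}) = 1$ precisely when $p,q$ are cyclically adjacent in a cycle of $\rho$ of length at least $3$. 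The crux of the argument is the claim that if $[\sigma] \not\cleqq [\tau]$, then there is a pair $\{p,q\}$ with $m_\sigma(\{p,q\}) > m_\tau(\{p,q\})$.

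To prove the claim I would first observe, by unwinding Definitions~\ref{dfn:cycle-notation},~\ref{dfn:ceq}, and~\ref{dfn:cleqq}, that $[\sigma] \cleqq [\tau]$ holds if and only if every cycle of $\sigma$ of length at least $2$ equals a cycle of $\tau$ up to inversion (the cycles of $\sigma$ may be reoriented freely, and $\tau' = \tau$ is an admissible witness). Thus $[\sigma] \not\cleqq [\tau]$ supplies a cycle $\sigma_0 = (c_1\,\cdots\,c_\ell)$ of $\sigma$ with $\ell \geq 2$ such that neither $\sigma_0$ nor $\sigma_0^{-1}$ is a cycle of $\tau$. If $\ell = 2$ then $m_\sigma(\{c_1,c_2\}) = 2$, while $(c_1\,c_2)$ failing to be a cycle of $\tau$ forces $m_\tau(\{c_1,c_2\}) \leq 1$, so $\{c_1,c_2\}$ works. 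If $\ell \geq 3$, the $\ell$ edges $\{c_1,c_2\}, \dots, \{c_\ell, c_1\}$ are distinct and each has $\sigma$-multiplicity $1$, so it suffices to exhibit one with $\tau$-multiplicity $0$. Supposing instead that $\tau$ realizes all $\ell$ of these edges, I would count outgoing images: every realized edge arises from some $c_i \mapsto \tau(c_i)$ with $\tau(c_i)$ a cyclic neighbor of $c_i$, and only $\ell$ such images exist, so realizing $\ell$ distinct edges forces $\tau(c_i) \in \{c_{i-1}, c_{i+1}\}$ for every $i$ with the realized edges pairwise distinct. A short induction then shows that $\tau$ acts on $\{c_1,\dots,c_\ell\}$ exactly as $\sigma_0$ or as $\sigma_0^{-1}$, making one of them a cycle of $\tau$ — a contradiction. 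Hence some edge of $\sigma_0$ is unused by $\tau$, proving the claim.

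Granting the claim, the proposition follows quickly. Incomparability gives both $[\sigma] \not\cleqq [\tau]$ and $[\tau] \not\cleqq [\sigma]$. For the first, apply the claim to obtain $\{p,q\}$ with $m_\sigma(\{p,q\}) > m_\tau(\{p,q\})$, then invoke Lemma~\ref{lem:entries-to-zero} to build a real positive definite $A$ with $a_{pq} = a_{qp} = \varepsilon$ and all other entries exceeding $1$. Every entry of $A$ is positive, so $A_\sigma, A_\tau > 0$; collecting the $\varepsilon$-factors yields $A_\sigma = \varepsilon^{\,m_\sigma(\{p,q\})} P_\sigma$ and $A_\tau = \varepsilon^{\,m_\tau(\{p,q\})} P_\tau$, where $P_\sigma, P_\tau \geq 1$ are products of the remaining entries. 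As $\varepsilon \to 0^+$ the non-$\varepsilon$ entries of the Lemma's matrix stay in a fixed bounded range, so $P_\sigma/P_\tau$ remains bounded while $A_\sigma/A_\tau = \varepsilon^{\,m_\sigma - m_\tau}(P_\sigma/P_\tau) \to 0$, thanks to the positive exponent $m_\sigma - m_\tau \geq 1$. Choosing $\varepsilon$ small enough gives $0 < A_\sigma < A_\tau$. Applying the claim to $[\tau] \not\cleqq [\sigma]$ and repeating verbatim produces a real positive definite $A'$ with $0 < A'_\tau < A'_\sigma$.

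I expect the main obstacle to be the $\ell \geq 3$ case of the claim: showing that if $\tau$ uses every edge of a length-$\ell$ cycle of $\sigma$, then $\tau$ must contain that cycle or its inverse. The delicate point is that $\tau$ might a priori traverse some of these edges in the opposite direction, or pair two of them into a $2$-cycle; the observation that the $\ell$ edges must be realized by $\ell$ \emph{distinct} outgoing images is exactly what excludes these possibilities, and I would take care to make that counting step watertight. The remaining ingredients — the $2$-cycle case, the translation of $\not\cleqq$ into an unshared cycle, and the $\varepsilon \to 0$ estimate — are routine bookkeeping.
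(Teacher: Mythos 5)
Your proposal is correct and follows essentially the same route as the paper: extract a cycle of $\sigma$ not shared (up to inversion) with $\tau$, locate a pair $\{p,q\}$ whose entry appears strictly more often in $A_\sigma$ than in $A_\tau$, and apply Lemma~\ref{lem:entries-to-zero} with $\epsilon \to 0$. Your multiplicity function $m_\rho$ and the pigeonhole count of outgoing images merely repackage the paper's two-case analysis (the $\ell\ge 3$ "original versus reversed factor" argument and the $2$-cycle case) in a slightly more uniform way.
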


\begin{proof}

If $[\tau]$ is incomparable to $[\sigma]$ with respect to $\cleqq$, we claim that one of the following statements holds:
\begin{enumerate}
\item there exist $p,q$ such that $a_{pq}$ appears as a factor in $A_\sigma$, but neither $a_{pq}$ nor $a_{qp}$ appears in $A_\tau$; or \label{3cyclecase}
\item there exist $p,q$ such that both $a_{pq}$ and $a_{qp}$ appear as a factors in $A_\sigma$, while at most one of $a_{pq}$ and $a_{qp}$ appear as factors in $A_\tau$. \label{2cyclecase}
\end{enumerate}
In either case, we will be able to apply Lemma~\ref{lem:entries-to-zero} to complete the proof.

Since $[\tau]$ and $[\sigma]$ are incomparable with respect to $\cleqq$, there is some cycle $c$ of $\sigma$ which is neither equal to nor the reverse of any cycle of $\tau$. For Case~\ref{3cyclecase}, suppose we have $c = (c_1, c_2, \ldots, c_\ell)$ with $\ell \geq 3$. Then $a_{c_1,c_2}, a_{c_2,c_3}, \ldots, a_{c_\ell, c_1}$ all appear as factors in $A_\sigma$. Certainly, we cannot have all of these factors appearing in $A_\tau$, because then $c$ would be a cycle of $\tau$ as well. Similarly, we cannot have the reverse factors $a_{c_2,c_1}, a_{c_3,c_2}, \ldots, a_{c_1,c_\ell}$ all appearing, because then the reverse of $c$ is a cycle of $\tau$. It remains to eliminate the case wherein, for each $i$, either $a_{c_i, c_{i+1}}$ or $a_{c_{i+1},c_i}$ appears, in some combination of original and reversed factors; here we take indices mod $\ell$. If this were true, then at some point we would have an original factor $a_{c_i,c_{i+1}}$ followed by a reversed factor $a_{c_{i+2},c_{i+1}}$ appearing in $A_\tau$, or vice versa: $a_{c_{i+1}, c_i}$ appears followed by $a_{c_{i+1},c_{i+2}}$. Since $\ell \geq 3$, we have $c_i \neq c_{i+2}$. In this first case, we would have $\tau(c_i)=c_{i+1}$ and and also $\tau(c_{i+2})=c_{i+1}$. This is a contradiction, since $\tau$ is a bijective map and in this case would fail to be injective. In the second case, then we have $\tau(c_{i+1})=c_i$ and $\tau(c_{i+1})=c_{i+2}$, so $\tau$ fails to be well-defined. The only remaining possibility is that, for some $i$, neither $a_{c_i,c_{i+1}}$ nor $a_{c_{i+1},c_i}$ appears in $A_\tau$. Taking $c_i=p$ and $c_{i+1}=q$, we have satisfied Case~\ref{3cyclecase}.

Now, for Case~\ref{2cyclecase}, suppose that the only cycles of $\sigma$ not appearing in $\tau$ are $2$-cycles. Then, letting $c=(c_1,c_2)$ be such a cycle, we know $a_{c_i,c_{i+1}}$ and $a_{c_{i+1},c_i}$ both appear in $A_\sigma$. If both of these appear as factors in $A_\tau$, then $\tau$ has $c$ as a $2$-cycle as well, which is a contradiction. Thus, taking $c_1=p$ and $c_2=q$, we have satisfied Case~\ref{2cyclecase}.

Now, in either case, let us apply Lemma~\ref{lem:entries-to-zero} to construct a positive definite matrix $A$ which has $a_{pq}$ and $a_{qp}$ equal to $\epsilon$, while all other entries are greater than $1$. In Case~\ref{3cyclecase}, we know that $A_\tau > 1$, since it is a product of factors which are each greater than $1$. On the other hand, $A_\sigma$ has a factor equal to $\epsilon$, so for a sufficiently small choice of $\epsilon$, we have $0 < A_\sigma < A_\tau$. Case~\ref{2cyclecase} is similar, except $A_\tau$ may have a factor of $\epsilon$; nevertheless, $A_\sigma$ now has $\epsilon^2$ as a factor, so again we may choose $\epsilon$ sufficiently small so that $0 < A_\sigma < A_\tau$. In either case, we have obtained the desired inequality.

To finish the proof, we may repeat the same argument with $\sigma$ and $\tau$ swapped and construct a matrix $A'$ with $0 < A'_\tau < A'_\sigma$.
\end{proof}

To conclude, we provide one Theorem and two Corollaries which summarize the results in this section.

\begin{thm}\label{thm:abs-complex}
Let $\sigma, \tau \in S_n$. Then the inequality $|X_\sigma| \leq |X_\tau|$ holds for all positive semi-definite $X \in M_n(\mathbb{C})$ if and only if $[\tau] \cleqq [\sigma]$, with equality $|X_\sigma| = |X_\tau|$ for all such $X$ if and only if $\tau \ceq \sigma$.
\end{thm}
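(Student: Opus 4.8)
The plan is to read the statement as two separate biconditionals—one governing the inequality $|X_\sigma| \le |X_\tau|$ and one governing the equality $|X_\sigma| = |X_\tau|$—and to observe that once the inequality biconditional is established, the equality biconditional comes almost for free. Concretely, I would first prove that $|X_\sigma| \le |X_\tau|$ for all positive semi-definite $X$ if and only if $[\tau] \cleqq [\sigma]$. Granting this, the equality $|X_\sigma| = |X_\tau|$ holds for all such $X$ exactly when both $|X_\sigma| \le |X_\tau|$ and $|X_\tau| \le |X_\sigma|$ hold universally, i.e.\ exactly when $[\tau] \cleqq [\sigma]$ and $[\sigma] \cleqq [\tau]$. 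Since $\cleqq$ is a genuine partial order by Proposition~\ref{prp:ceqq-well-defined}(1), antisymmetry then forces $[\sigma] = [\tau]$, which is precisely the relation $\sigma \ceq \tau$; the reverse implication for equality is already Proposition~\ref{prp:ceq-implies-equal}. Thus the entire theorem reduces to the inequality biconditional together with the partial-order axioms already recorded.

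The backward direction of the inequality biconditional is routine: assuming $[\tau] \cleqq [\sigma]$, I would apply Proposition~\ref{prp:ceqq-well-defined}(2) with $\tau' = \tau$ to obtain some $\sigma' \in [\sigma]$ with $\tau \cleq \sigma'$. Proposition~\ref{prp:cleqq-implies-inequality} then gives $|X_{\sigma'}| \le |X_\tau|$ for every positive semi-definite $X$, while Proposition~\ref{prp:ceq-implies-equal} gives $|X_{\sigma'}| = |X_\sigma|$ since $\sigma' \ceq \sigma$; chaining these yields $|X_\sigma| \le |X_\tau|$.

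The substance of the argument, and the step I expect to be the main obstacle, is the forward direction: that $|X_\sigma| \le |X_\tau|$ for all positive semi-definite $X$ forces $[\tau] \cleqq [\sigma]$. I would argue by contraposition. First I would unwind Definitions~\ref{dfn:cleq} and~\ref{dfn:cleqq} through the $\ceq$-classes to check the clean reformulation that $[\tau] \cleqq [\sigma]$ holds precisely when every cycle of $\tau$ of length at least $2$ is equal to, or is the inverse of, some cycle of $\sigma$. Hence if $[\tau] \not\cleqq [\sigma]$, there is a cycle of $\tau$ of length at least $2$ that is neither equal to nor the reverse of any cycle of $\sigma$. This is exactly the combinatorial input used in the second half of the proof of Proposition~\ref{prp:non-cleqq-counterexample}: applying that construction with the roles of $\sigma$ and $\tau$ interchanged, so that the small entry $\epsilon$ supplied by Lemma~\ref{lem:entries-to-zero} lands as a factor of $A_\tau$ rather than of $A_\sigma$, produces a positive definite matrix $A'$ with $0 < A'_\tau < A'_\sigma$. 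Then $|A'_\sigma| > |A'_\tau|$, so the proposed inequality fails at $X = A'$, giving the contradiction.

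The delicate point is that Proposition~\ref{prp:non-cleqq-counterexample} is stated under the hypothesis that $[\sigma]$ and $[\tau]$ are \emph{incomparable}, whereas the contrapositive only furnishes the weaker one-sided failure $[\tau] \not\cleqq [\sigma]$, which also permits $[\sigma] \cleqq [\tau]$ strictly. What rescues the argument is that the construction of $A'$ in that proof never invokes $[\sigma] \not\cleqq [\tau]$: it depends solely on the existence of a cycle of $\tau$ that is unmatched, up to inversion, among the cycles of $\sigma$, and this is exactly the content of $[\tau] \not\cleqq [\sigma]$. I would therefore make this one-sidedness explicit—either by isolating the relevant half of Proposition~\ref{prp:non-cleqq-counterexample} as a standalone claim or by re-running the two cases (the $\ell \ge 3$ cycle case and the transposition case) directly via Lemma~\ref{lem:entries-to-zero}—so that the appeal in the forward direction is unambiguous and does not secretly assume full incomparability.
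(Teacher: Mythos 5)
Your proposal is correct, and for the inequality biconditional it follows the paper's route (Propositions~\ref{prp:cleqq-implies-inequality}, \ref{prp:ceq-implies-equal}, \ref{prp:ceqq-well-defined} for the `if' direction; the counterexample construction for the `only if' direction). Two points where you diverge are worth recording. First, the ``delicate point'' you flag is real: the paper's proof of the `only if' direction simply cites Proposition~\ref{prp:non-cleqq-counterexample}, which is stated under the hypothesis that $[\sigma]$ and $[\tau]$ are \emph{incomparable}, whereas the contrapositive only supplies $[\tau] \not\cleqq [\sigma]$, leaving open the case $[\sigma] \cleqq [\tau]$ strictly. Your observation that the relevant half of that proof --- producing $A'$ with $0 < A'_\tau < A'_\sigma$ via Lemma~\ref{lem:entries-to-zero} --- uses only the existence of a cycle of $\tau$ unmatched (up to inversion) among the cycles of $\sigma$, i.e.\ exactly $[\tau] \not\cleqq [\sigma]$, is the correct repair, and isolating that one-sided statement is a genuine improvement in precision over the paper's citation. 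Second, for the equality biconditional the paper runs a fresh counterexample argument (``nearly identical to'' Proposition~\ref{prp:non-cleqq-counterexample}) to separate $A_\sigma$ from $A_\tau$ when $\tau \not\ceq \sigma$, whereas you deduce it formally from the inequality biconditional together with antisymmetry of $\cleqq$ (Proposition~\ref{prp:ceqq-well-defined}(1)): universal equality is equivalent to both universal inequalities, hence to $[\tau] \cleqq [\sigma]$ and $[\sigma] \cleqq [\tau]$, hence to $[\sigma]=[\tau]$. This is cleaner and avoids duplicating the construction, at the mild cost of leaning on the unproved-but-routine antisymmetry claim of Proposition~\ref{prp:ceqq-well-defined}; your reformulation of $\cleqq$ in terms of cycles matched up to inversion is exactly what one needs to verify that claim, so nothing is lost.
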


\begin{proof}
First, we deal with the statement on inequalities. The `if' direction is obtained by applying Proposition~\ref{prp:ceq-implies-equal} to Proposition~\ref{prp:cleqq-implies-inequality}. The `only if' direction is Proposition~\ref{prp:non-cleqq-counterexample}.

For the equalities: if $\tau \not\ceq \sigma$, then one of the permutations has a cycle which neither equal to, nor the reverse of, any cycle of the other. Then we may apply an argument nearly identical to the one found in the proof of Proposition~\ref{prp:non-cleqq-counterexample} to obtain positive definite matrices with $A_\sigma \neq A_\tau$. If $\tau \ceq \sigma$, then we apply Proposition~\ref{prp:ceq-implies-equal}.
\end{proof}

As was mentioned before, one obtains no additional relations of the form $|X_\sigma| \leq |X_\tau|$ or $|X_\sigma| = |X_\tau|$ if we insist on $X$ being real-valued or positive definite. However, once we drop the absolute values, we are able to make some additional statements, mostly owing to the following lemma:

\begin{lem}\label{lem:inv-implies-pos}
Suppose $X \in M_n(\mathbb{C})$ is positive semi-definite, and that $\tau \in S_n$ is an involution. Then $X_\tau$ is a positive real number.
\end{lem}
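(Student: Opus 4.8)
The plan is to exploit the special cycle structure of an involution together with the Hermitian and nonnegativity conditions furnished by Theorem~\ref{thm:PSD-equivalents}. Since $\tau$ is an involution, $\tau^2 = \mathrm{id}$, so every cycle of $\tau$ is either a fixed point or a transposition; that is, $\tau = (a_1\,b_1)\cdots(a_k\,b_k)$ together with some set $F$ of fixed points, and the cycles of $\tau$ partition $[n]$. First I would use this decomposition to factor the $\tau$-diagonal as
\[
X_\tau = \prod_{i \in F} x_{ii} \;\prod_{j=1}^{k} \bigl(x_{a_j, b_j}\, x_{b_j, a_j}\bigr),
\]
where the factorization is a regrouping of the product $\prod_{k} x_{k,\tau(k)}$ according to which cycle of $\tau$ the index $k$ belongs to. Because the cycles partition $[n]$, each matrix entry appearing in $X_\tau$ is accounted for exactly once.

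Next I would analyze each type of factor separately. For a fixed point $i$, the factor is the diagonal entry $x_{ii}$; since $X = X^*$, this entry is real, and since the $1 \times 1$ principal minor $x_{ii}$ is nonnegative by Theorem~\ref{thm:PSD-equivalents}, we have $x_{ii} \geq 0$. For a transposition $(a_j\,b_j)$, the Hermitian condition gives $x_{b_j, a_j} = \overline{x_{a_j, b_j}}$, so the corresponding factor equals $x_{a_j, b_j}\,\overline{x_{a_j, b_j}} = |x_{a_j, b_j}|^2 \geq 0$. Assembling these observations, $X_\tau$ is a product of nonnegative real numbers and is therefore itself a nonnegative real number, which is the asserted conclusion (here ``positive'' is understood in the nonnegative sense appropriate to the PSD setting, as entries of a PSD matrix may vanish).

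There is no serious obstacle in this argument: the entire content is the observation that an involution has only fixed points and transpositions as cycles, so that $X_\tau$ groups into diagonal entries $x_{ii}$ and conjugate pairs $x_{ab}x_{ba}$, each of which is forced to be real and nonnegative by the PSD hypothesis. The only point requiring any care is to confirm that the factorization above is a genuine reindexing of $X_\tau$ using each relevant entry exactly once, which is immediate from the fact that the cycles of $\tau$ partition $[n]$.
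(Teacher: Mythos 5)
Your proof is correct and takes essentially the same route as the paper: factor $X_\tau$ over the cycles of the involution and observe that each transposition contributes $x_{ab}\overline{x_{ab}} = |x_{ab}|^2 \geq 0$ while each fixed point contributes a nonnegative diagonal entry. You are in fact slightly more careful than the paper's own proof, which omits the fixed-point factors and overlooks that $X_\tau$ can vanish (so ``positive'' should indeed read ``nonnegative,'' as you note --- this does not affect the later applications, which only use $X_\tau = |X_\tau|$).
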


\begin{proof}
Since $\tau$ is an involution, $X_\tau$ is a product of factors of the form $x_{ij} x_{ji} = x_{ij}\overline{x_{ij}}$. For any complex number $z$, we have $z\overline{z} \in \mathbb{R}$ and $z\overline{z} > 0$.
\end{proof}

\begin{cor}\label{cor:nabs-complex}
Let $\sigma, \tau \in S_n$. Then the inequality $X_\sigma \leq X_\tau$ holds for all positive semi-definite $X \in M_n(\mathbb{C})$ if and only if $\sigma$ and $\tau$ are involutions and $\tau \cleq \sigma$.
\end{cor}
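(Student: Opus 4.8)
The plan is to reduce the entire statement to the absolute-value result of Theorem~\ref{thm:abs-complex}, once I have pinned down exactly when the bare products $X_\sigma$ and $X_\tau$ are real numbers at all. The guiding interpretive point is that for the inequality $X_\sigma \le X_\tau$ to hold for \emph{all} PSD $X$, it must in particular be a meaningful comparison of real numbers for every such $X$; so a necessary condition is that both $X_\sigma$ and $X_\tau$ are real for all PSD $X$. Everything then hinges on a realness criterion, which I would isolate as the main lemma.

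The realness criterion I would prove is: for a fixed $\sigma \in S_n$, the quantity $X_\sigma$ is real for every PSD $X$ if and only if $\sigma$ is an involution. The ``if'' direction is exactly Lemma~\ref{lem:inv-implies-pos}. For ``only if'', I would use the identity $\overline{X_\sigma} = X_{\sigma^{-1}}$, which holds for any Hermitian $X$: indeed $\overline{X_\sigma} = \prod_k \overline{x_{k,\sigma(k)}} = \prod_k x_{\sigma(k),k}$, and reindexing this product by $j = \sigma(k)$ yields $X_{\sigma^{-1}}$. Consequently $X_\sigma \in \mathbb{R}$ for all PSD $X$ is equivalent to the identity $X_\sigma = X_{\sigma^{-1}}$ holding on the PSD cone. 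Since the positive \emph{definite} matrices form a nonempty open subset of the real vector space of Hermitian matrices, an identity of polynomial functions on the PSD cone is an identity in the coordinates $\{x_{ii}\}$ and $\{\operatorname{Re} x_{ij}, \operatorname{Im} x_{ij}\}_{i<j}$; comparing the two monomials $X_\sigma$ and $X_{\sigma^{-1}}$ in the entries (taking the Hermitian relations $x_{ji} = \overline{x_{ij}}$ into account) shows they can only agree when $\sigma = \sigma^{-1}$. Alternatively, and more concretely, one may perturb: for $X = I + \epsilon N$ with $N$ Hermitian and $\epsilon > 0$ small, $X$ is positive definite and $X_\sigma$ has leading term $\epsilon^{m}\prod_k n_{k,\sigma(k)}$ over the points $k$ moved by $\sigma$; choosing the entries along a cycle of length $\ge 3$ so that this product is non-real produces a positive definite $X$ with $X_\sigma \notin \mathbb{R}$. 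This is the step where the genuine work lies.

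With the criterion established, the ``only if'' direction proceeds as follows. If $X_\sigma \le X_\tau$ holds for all PSD $X$, then $X_\sigma$ and $X_\tau$ are real for all PSD $X$, so by the criterion both $\sigma$ and $\tau$ are involutions. By Lemma~\ref{lem:inv-implies-pos} they are in fact positive, so $X_\sigma = |X_\sigma|$ and $X_\tau = |X_\tau|$, and the hypothesis becomes $|X_\sigma| \le |X_\tau|$ for all PSD $X$. Theorem~\ref{thm:abs-complex} then yields $[\tau] \cleqq [\sigma]$. Finally, since an involution consists only of $2$-cycles and fixed points, each of which equals its own inverse, its $\ceq$-class is a singleton by Definition~\ref{dfn:ceq}; hence $[\sigma] = \{\sigma\}$ and $[\tau] = \{\tau\}$, so the relation $[\tau] \cleqq [\sigma]$ collapses, via Definition~\ref{dfn:cleqq}, to $\tau \cleq \sigma$.

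The ``if'' direction is then immediate from the assembled pieces: if $\sigma$ and $\tau$ are involutions with $\tau \cleq \sigma$, then $X_\sigma$ and $X_\tau$ are positive reals by Lemma~\ref{lem:inv-implies-pos}, so $X_\sigma = |X_\sigma| \le |X_\tau| = X_\tau$ by Proposition~\ref{prp:cleqq-implies-inequality}. I expect the main obstacle to be the realness criterion, specifically giving a clean justification that an identity of the monomials $X_\sigma$ and $X_{\sigma^{-1}}$ on the PSD cone forces $\sigma = \sigma^{-1}$, together with stating carefully the convention that ``$X_\sigma \le X_\tau$ holds'' presupposes both quantities are real.
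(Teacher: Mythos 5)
Your proof is correct, and in its skeleton it matches the paper: the ``if'' direction (involutions give positive diagonals by Lemma~\ref{lem:inv-implies-pos}, then apply Proposition~\ref{prp:cleqq-implies-inequality}) and the observation that $\ceq$-classes of involutions are singletons, so that $\cleqq$ collapses to $\cleq$, are exactly the paper's steps. Where you genuinely diverge is the ``only if'' direction. The paper dismisses non-involutions with the single clause that ``the inequality is not well-defined'' and then appeals directly to Proposition~\ref{prp:non-cleqq-counterexample}; you instead isolate a realness criterion --- $X_\sigma$ is real for every PSD $X$ if and only if $\sigma$ is an involution --- and prove its nontrivial half by perturbation: for $X = I + \epsilon N$ with $N$ Hermitian and $\epsilon$ small, the entries of $N$ along a cycle of length at least $3$ occupy positions no two of which are transposes of one another, so they can be chosen to make $\prod_k n_{k,\sigma(k)}$ non-real, producing a positive definite $X$ with $X_\sigma \notin \mathbb{R}$. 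This is a genuine addition: the paper never exhibits such a witness, so your lemma supplies the justification that the phrase ``not well-defined'' quietly assumes. (Keep the perturbation argument rather than the polynomial-identity sketch; ``comparing the two monomials shows they can only agree when $\sigma = \sigma^{-1}$'' needs essentially the same explicit cycle analysis anyway.) A second, smaller dividend of your routing through the if-and-only-if statement of Theorem~\ref{thm:abs-complex}, rather than through Proposition~\ref{prp:non-cleqq-counterexample} directly, is that you automatically handle the case where both permutations are involutions with $\tau \not\cleq \sigma$ but $\sigma \cleq \tau$ --- a comparable pair, hence not literally covered by the ``incomparable'' hypothesis the paper invokes.
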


\begin{proof}
For the `if' direction, since $\sigma$ and $\tau$ are both involutions, $X_\sigma$ and $X_\tau$ are positive by Lemma~\ref{lem:inv-implies-pos}, so if $\tau \cleq \sigma$, we apply Proposition~\ref{prp:cleqq-implies-inequality} and obtain \[X_\sigma = |X_\sigma| \leq |X_\tau| = X_\tau.\] For the `only if' direction, note that either $\sigma$ and $\tau$ are not involutions, in which case the inequality is not well-defined, or else $\tau$ and $\sigma$ are not comparable with respect to $\cleq$. Since an involution is composed only of $2$-cycles, and each $2$ cycle is equal to its inverse, each $\ceq$-equivalence class of an involution is a singleton. Thus the orders $\cleq$ and $\cleqq$ are the same on involutions, and we may apply Proposition~\ref{prp:non-cleqq-counterexample} to obtain a counterexample.
\end{proof}

\begin{cor}\label{cor:nabs-real}
Let $\sigma, \tau \in S_n$. Then the inequality $X_\sigma \leq X_\tau$ holds for all positive semi-definite $X \in M_n(\mathbb{R})$ if and only if $\tau$ is an involution and $\tau \cleq \sigma$.
\end{cor}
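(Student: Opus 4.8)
The plan is to prove the two directions separately, dispatching `if' immediately and locating the real difficulty in the `only if' direction, specifically in forcing $\tau$ to be an involution. For the `if' direction, suppose $\tau$ is an involution and $\tau\cleq\sigma$. Lemma~\ref{lem:inv-implies-pos} gives $X_\tau=|X_\tau|>0$, while Proposition~\ref{prp:cleqq-implies-inequality} gives $|X_\sigma|\le|X_\tau|$. As $X$ is real, $X_\sigma$ is a real number, so $X_\sigma\le|X_\sigma|$; chaining these yields $X_\sigma\le|X_\sigma|\le|X_\tau|=X_\tau$.

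For the `only if' direction, assume $X_\sigma\le X_\tau$ for all real positive semi-definite $X$ (with $\sigma\neq\tau$, the comparison being trivial otherwise). First I would extract the relation $[\tau]\cleqq[\sigma]$ by restricting to matrices with strictly positive entries: for such $X$ every generalized diagonal is positive, so the hypothesis reads $|X_\sigma|\le|X_\tau|$. Since every counterexample matrix produced in Lemma~\ref{lem:entries-to-zero} and Proposition~\ref{prp:non-cleqq-counterexample} has strictly positive entries, the very constructions proving the `only if' half of Theorem~\ref{thm:abs-complex} also violate the real inequality whenever $[\tau]\not\cleqq[\sigma]$; hence $[\tau]\cleqq[\sigma]$ must hold.

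The heart of the argument is showing $\tau$ is an involution. Suppose not, so $\tau$ has a cycle $c=(c_1,\dots,c_\ell)$ with $\ell\ge3$. Using $[\tau]\cleqq[\sigma]$, pick $\sigma'\in[\sigma]$ with $\tau\cleq\sigma'$; because reversing a cycle of a symmetric matrix leaves its entrywise product unchanged, $X_{\sigma'}=X_\sigma$ for every real symmetric $X$, so I may replace $\sigma$ by $\sigma'$ and assume outright that $\tau\cleq\sigma$. Then $c$ is a cycle shared by $\sigma$ and $\tau$, every other cycle of $\tau$ is a cycle of $\sigma$, and the extra cycles of $\sigma$ have support inside $\mathrm{Fix}(\tau)$; the assumption $\sigma\neq\tau$ guarantees at least one such extra cycle $\rho$. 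I would then construct a block-diagonal real Gram matrix, one block per cycle together with a block on $\mathrm{Fix}(\sigma)$. On the block for $c$ I take $I_\ell+\epsilon N$, where $N$ carries $\pm1$ on the cyclically adjacent positions with an odd number of $-1$'s, so that $I_\ell+\epsilon N$ is positive definite for small $\epsilon$ while the cycle product is $P_c=-\epsilon^{\ell}<0$; on every other block I take $I+\delta M$ with $M$ the all-$+1$ cycle pattern, making each remaining cycle product positive and each extra-cycle product a positive power of $\delta$, hence in $(0,1)$ for small $\delta$. Since $c$ contributes the identical factor $P_c$ to both diagonals and all shared remaining factors are positive, one finds $X_\sigma=X_\tau\cdot t$ with $X_\tau<0$ and $t\in(0,1)$ the product of those powers, whence $X_\sigma>X_\tau$. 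This contradiction forces $\tau$ to be an involution.

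Finally I would combine the two facts: once $\tau$ is an involution, $[\tau]=\{\tau\}$ and each $2$-cycle equals its own reverse, so $C_2(\sigma')=C_2(\sigma)$ for all $\sigma'\in[\sigma]$; thus $[\tau]\cleqq[\sigma]$ is equivalent to $\tau\cleq\sigma$, yielding both required conditions. The hard part is the block construction: one must realize a strictly negative product on the long cycle $c$ while simultaneously keeping every extra-cycle product positive but strictly below the product of the corresponding diagonal entries, all inside a single positive semi-definite matrix. The block-diagonal (hence diagonally dominant) structure is exactly what decouples these competing demands, since the generalized diagonals $X_\sigma$ and $X_\tau$ only ever pair indices lying in a common cycle, and therefore factor across the blocks.
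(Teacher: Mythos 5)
Your ``if'' direction is the paper's argument verbatim. Where you genuinely depart from the paper is the ``only if'' direction: the paper disposes of it by pointing to ``a similar argument'' as in Corollary~\ref{cor:nabs-complex}, but that argument rules out non-involutions by declaring the inequality not well-defined over $\mathbb{C}$ --- an escape unavailable over $\mathbb{R}$, where every generalized diagonal is a real number. Your two-step replacement --- first forcing $[\tau] \cleqq [\sigma]$ by observing that the counterexamples of Proposition~\ref{prp:non-cleqq-counterexample} have strictly positive entries, then attacking a long cycle $c$ of $\tau$ with a block Gram matrix whose $c$-block $I_\ell + \epsilon N$ carries an odd number of $-1$'s so that $P_c = -\epsilon^{\ell} < 0$ --- supplies exactly the content the paper's proof does not. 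The sign-flipping construction itself is sound: for $\ell \geq 3$ the $\ell$ unordered pairs $\{c_i, c_{i+1}\}$ are distinct, so the signs can be assigned symmetrically; each block is diagonally dominant, hence positive definite; and since $X_\tau < 0$ while the extra cycles of $\sigma$ contribute a factor $t \in (0,1)$, indeed $X_\sigma = t\, X_\tau > X_\tau$.

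There is, however, one genuine gap, and it is not entirely yours to fill. After you replace $\sigma$ by $\sigma' \in [\sigma]$ with $\tau \cleq \sigma'$, the hypothesis $\sigma \neq \tau$ does \emph{not} guarantee an extra cycle $\rho$: one may have $\sigma' = \tau$ even though $\sigma \neq \tau$, namely exactly when $\sigma \ceq \tau$. In that case there is no factor $t \in (0,1)$ and your contradiction evaporates. Worse, the corollary itself fails there: take $\sigma = (123)$ and $\tau = (132)$ in $S_3$. For every real symmetric $X$ one has $X_\tau = x_{13}x_{21}x_{32} = x_{12}x_{23}x_{31} = X_\sigma$, so $X_\sigma \leq X_\tau$ holds for every real positive semi-definite $X$, yet $\tau$ is not an involution and $\tau \cnleq \sigma$. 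The statement should read ``if and only if $\sigma \ceq \tau$, or $\tau$ is an involution and $\tau \cleq \sigma$''; with that amendment your argument closes, since when $\sigma \not\ceq \tau$ the extra cycle exists and your block construction applies, and your final reduction from $[\tau] \cleqq [\sigma]$ to $\tau \cleq \sigma$ for involutions $\tau$ is correct.
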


\begin{proof}
Since $\tau$ is an involution, by Lemma~\ref{lem:inv-implies-pos}, we know $X_\tau > 0$. Thus, when $\tau \cleq \sigma$, we have $X_\sigma \leq |X_\sigma| \leq |X_\tau|=X_\tau$ by Proposition~\ref{prp:cleqq-implies-inequality}. For the other direction, we may apply a similar argument as in the proof of Corollary~\ref{cor:nabs-complex}.
\end{proof}






\printbibliography

\end{document}